\tikzset
{
  block/.style = {shape=rectangle, rounded corners,
                  draw, anchor=center, minimum height = 0em,
                  align=center, minimum width = 2em,
                  inner sep=1ex},
  operator/.style = {block, circle, inner sep = .3ex, minimum width = 0em, 
                     minimum height = 0em},
  summary/.style = {block, fill=black!10, rounded corners=0cm, minimum height = 
  0em,}
}
\newcommand{\edge}[2]{\draw (#1) edge node (TMP) {} (#2);}
\newcommand{\layername}[4]{\path (#1) -- node (TMP) {} (#2);
                           \draw node [left of=TMP, xshift=#4] {#3};}
\newcommand{\edgelabel}[1]{\draw node [right of=TMP, anchor=west, 
                           xshift=-2.2em] {#1};}
\newcommand{\edgelabelleft}[1]{\draw node [left of=TMP, anchor=east, 
                               xshift=+2.2em] {#1};}
\newcommand{\skipedge}[3] {\draw (#1) -- +(#3,0) |-
                           node (TMP) {}
                           node [near start,left] {Id} (#2);}
\newcommand{\emptyskipedge}[3] {\draw (#1) -- +(#3,0) |-
                           node (TMP) {}
                           (#2);}
\newcommand{\dx}{\partial_x}
\newcommand{\dt}{\partial_t}
\newcommand{\nor}[1]{||#1||_2}
\begin{document}
\title{Translating Numerical Concepts for PDEs\\ 
into Neural Architectures\thanks{This 
work has received funding from the European Research Council (ERC) 
under the European Union's Horizon 2020 research and innovation programme 
(grant agreement no. 741215, ERC Advanced Grant INCOVID).}}
%
%\titlerunning{Abbreviated paper title}
% If the paper title is too long for the running head, you can set
% an abbreviated paper title here
%
\author{Tobias Alt \and
        Pascal Peter \and
        Joachim Weickert \and
        Karl Schrader} 
\authorrunning{T. Alt et al.}
% First names are abbreviated in the running head.
% If there are more than two authors, 'et al.' is used.
%
\institute{Mathematical Image Analysis Group,
Faculty of Mathematics and Computer Science,\\
Campus E1.7, Saarland University, 66041 Saarbr\"ucken, Germany.\\
\email{\{alt, peter, weickert, schrader\}@mia.uni-saarland.de}}
\maketitle              % typeset the header of the contribution

\begin{abstract}
We investigate what can be learned from translating numerical algorithms
into neural networks. On the numerical side, we consider explicit,
accelerated explicit, and implicit schemes for a general higher order 
nonlinear diffusion equation in 1D, as well as linear multigrid 
methods. On the neural network side, we identify corresponding 
concepts in terms of residual networks (ResNets), recurrent networks,
and U-nets. These connections guarantee Euclidean stability 
of specific ResNets with a transposed convolution
layer structure in each block. We present three numerical justifications for 
skip connections: as time discretisations in explicit schemes, as 
extrapolation mechanisms for accelerating those methods, and as recurrent 
connections in fixed point solvers for implicit schemes. Last but not 
least, we also motivate uncommon design choices such as nonmonotone 
activation functions. Our findings give a numerical perspective 
on the success of modern neural network architectures, and they provide design 
criteria for stable networks.

%The abstract should briefly summarize the contents of the paper in
%150--250 words. 

\keywords{numerical algorithms \and 
          partial differential equations \and 
          convolutional neural networks \and
          nonlinear diffusion \and
          stability}
\end{abstract}

%%%%%%%%%%%%%%%%%%%%%%%%%%%%%%%%%%%%%%%%%%%%%%%%%%%%%%%%%%%%%%%%%%%%%%%%%%%

\section{Introduction}

The remarkable success of convolutional neural networks (CNNs) has
triggered many researchers to analyse their behaviour and to come up 
with mathematical foundations and stability guarantees.
One strategy consists of interpreting networks as approximations of
evolution equations; see e.g.~\cite{CP16,RH20,SPBD20}. Then training 
a network comes down to parameter identification of ordinary 
or partial differential equations (PDEs). This can be challenging, 
since it requires various model assumptions: 
Without additional smoothness assumptions, the models may become 
very complicated involving millions of parameters. 
Moreover, connecting a discrete network to a continuous evolution 
equation involves ambiguous limit assumptions: The same 
discrete model can approximate multiple evolution equations with 
different orders of consistency. 

\newpage 
\noindent
We address these problems by following two guiding principles: 
\begin{enumerate}
\item We refrain from the {\em analytic} strategy of translating a 
      complex neural network into a compact model, since it involves
      the discussed problems and only analyses how a network is,
      but not how it should be. Instead we pursue a {\em synthetic} 
      approach: We translate successful concepts into networks. 
      This is easier, and it allows to understand how a network 
      should be to guarantee desirable qualities such as stability and 
      efficiency. 
\item Our concepts of choice are numerical algorithms rather than continuous 
      models. This avoids ambiguities in the limit
      assumptions. Similar to neural architectures, 
      numerical algorithms can be applied to a multitude of models.
      We believe that the design principles of modern neural networks 
      realise a small but powerful set of numerical strategies as a basis of 
      their success.
      
\end{enumerate}
Thus, we want to justify key components of neural architectures and 
derive novel design principles by translating popular numerical 
algorithms into networks.

%------------------------------------------------------------------------

\subsubsection{Our Contributions.}
As an exemplary starting point and a basis for exploring different 
numerical algorithms, we consider a general evolution equation for 
higher order nonlinear diffusion in 1D. 

First we show that an explicit finite difference discretisation can
be interpreted as a residual network (ResNet) \cite{HZRS16}. 
This gives two central insights: The diffusion flux function determines 
the activation function of the ResNet, and the two convolutional filters 
follow a transposed structure. This motivates the use of nonmonotone 
activation functions and allows us to guarantee stability in the Euclidean 
norm for specific ResNets. Moreover, we identify the skip connections 
in the ResNet as discrete time derivatives. 

Additional interpretations of skip connections are obtained with 
alternative numerical methods based on fast semi-iterative (FSI) 
accelerations of explicit schemes \cite{HOWR16} and on fixed point 
algorithms for fully implicit schemes. We show that the latter 
ones can be regarded as recurrent neural networks \cite{Ho82}.

Since multigrid methods \cite{BHM00} are efficient numerical
methods for PDEs, it is worthwhile to analyse them as well.
We demonstrate that they have structural connections to U-nets 
\cite{RFB15}, shedding some light on their efficiency.

Our results do not only inspire general design criteria for neural networks 
as well as stable architectures. They also provide structural 
insights into ResNets, RNNs and U-nets from the perspective of numerical 
algorithms.

%------------------------------------------------------------------------

\subsubsection{Related Work.}
Our philosophy to translate numerics into neural networks is shared 
in \cite{LZLD18,OPF19}. Both works motivate 
additional skip connections in ResNets from multistep schemes for ordinary 
differential equations. Our paper provides alternative motivations
via time discretisations, acceleration via extrapolation, as well as fixed 
point schemes for implicit discretisations. 

The stability of ResNets is studied from a differential equations perspective 
in \cite{RDF20,RH20,ZS20}. Particularly, Ruthotto and 
Haber \cite{RH20} show stability in the Euclidean norm for a specific form of 
residual networks. However, they require the activation function to be 
monotone. In contrast to their approach, our theory allows nonmonotone 
activation functions.
 
Several works connect multigrid ideas and CNNs, e.g. to learn the restriction
and prolongation operators \cite{GGKY19}, or to couple feature channels for 
parameter reduction \cite{EERT20}. He and Xu \cite{HX19} present a CNN 
architecture implementing multigrid approaches, however without connecting 
it to a U-net. We on the other hand present a direct correspondence between 
a simple multigrid solver and a U-net.

%------------------------------------------------------------------------

\subsubsection{Organisation of the Paper.}
In Section \ref{sec:explicit}, we translate different numerical approximations 
for higher order diffusion into CNNs and analyse the resulting 
architectures. Covering multi-resolution approaches, we show that a multigrid 
solver can be cast into a U-net form in Section \ref{sec:multigrid}. Finally, 
we present our conclusions and an outlook on future work in Section 
\ref{sec:conclusions}.

%%%%%%%%%%%%%%%%%%%%%%%%%%%%%%%%%%%%%%%%%%%%%%%%%%%%%%%%%%%%%%%%%%%%%%%%%%%

\section{Networks from Algorithms for Evolution Equations}
\label{sec:explicit}

In this section, we start with generalised diffusion filters in 1D,
and we translate three numerical algorithms for them into neural 
network architectures. This gives novel insights into the value of 
skip connections, stable network design, and the potential of 
nonmonotone activation functions.

%------------------------------------------------------------------------

\subsection{Generalised Nonlinear Diffusion}

We consider a generalised higher order nonlinear diffusion model in 1D.
It creates filtered signals $u(x, t): (a,b) \times [0, \infty) \rightarrow 
\mathbb{R}$ from an initial signal $f(x)$ on a domain $(a,b) \subset 
\mathbb{R}$ according to the PDE
\begin{equation}\label{eq:diff}
  \dt u = -\mathcal{D}^* \! \left(g\!\left(|\mathcal{D} u|^2\right) \mathcal{D} 
                               u\right)
\end{equation}
which is the gradient flow that minimises the energy
% \begin{equation}
$E(u) = \int_a^b \Psi(|\mathcal{D} u|^2)\, dx$ 
% \end{equation}
with $g=\Psi'$.
We use a general differential operator $\mathcal{D} = \sum_{m=0}^M 
\alpha_m\partial_x^m$ and its adjoint version $\mathcal D^*=\sum_{m=0}^M (-1)^m 
\alpha_m\partial_x^m$, both consisting of weighted derivatives of up to order 
$M$. Thus, the corresponding PDE is of order $2M$.
The evolution is initialised at time $t=0$ by $u(x,0) = f(x)$, and we 
impose reflecting boundary conditions at $x=a$ and $x=b$.
Equation \eqref{eq:diff} creates gradually simplified versions of $f$.
The scalar \emph{diffusivity} function $g(s^2)$ controls the amount of 
smoothing depending on the local structure of the evolving signal. We consider 
diffusivities that are smooth, nonnegative, nonincreasing, and bounded from 
above.

Depending on the operator $\mathcal{D}$ and the choice of the diffusivity, the 
evolution describes different models. 
For $\mathcal{D} = \partial_x$, one obtains a 1D version of the nonlinear 
diffusion filter of Perona and Malik \cite{PM90}. For this model the 
exponential diffusivity $g(s^2) = \exp(-{s^2}/(2\lambda^2))$ inhibits 
smoothing near discontinuities where $|\dx u|$ exceeds a contrast 
parameter $\lambda$. This allows discontinuity-preserving smoothing.
A higher order choice of $\mathcal{D} = \partial_{x}^2$ yields a 
1D version of the fourth order PDE of You and Kaveh \cite{YK00}. 

%------------------------------------------------------------------------

\subsection{Residual Networks} 
Residual networks \cite{HZRS16} are a popular CNN architecture as they are 
easy to train, even for a high number of network layers. They consist of 
chained residual blocks. A residual block is made up of two convolutional 
layers with biases and nonlinear activation functions after each 
layer. Each block computes a discrete output $\bm u$ from an input 
$\bm f$ by
\begin{equation}\label{eq:residual_block}
  \bm u = \sigma_2 \! \left(
                      \bm f + \bm W_2 \, \sigma_1 \! 
                              \left(\bm W_1 \bm f + \bm b_1\right) 
                            + \bm b_2
                   \right),
\end{equation}
with discrete convolution matrices $\bm W_1, \bm W_2$, activation 
functions $\sigma_1, \sigma_2$ and bias vectors $\bm b_1, \bm b_2$. 
The main difference to feed-forward CNNs lies in the 
skip connection which adds the original input signal $\bm f$ to the result 
of the inner activation function $\sigma_1$. This facilitates training of very 
deep networks. 

%------------------------------------------------------------------------

\subsection{Expressing Explicit Schemes as Residual Networks}
In the following, we derive a direct correspondence between an explicit 
scheme for the generalised diffusion equation \eqref{eq:diff} and a ResNet. 
With the help of the \emph{flux function} $\Phi(s) = g(s^2) \, s$, we rewrite 
\eqref{eq:diff} as
\begin{equation}
  \dt u = - \mathcal{D}^* \! \left(\Phi(\mathcal{D} u)\right).
\end{equation}
We now discretise this equation with an explicit finite difference scheme.
To obtain discrete vectors $\bm u, \bm f \in \mathbb{R}^N$, we sample the 
continuous functions $u,f$ with distance $h$. We employ a forward 
difference with time step size $\tau$ for the time derivative. Moreover, we 
represent a discretisation of the operator $\mathcal D$ by a convolution matrix 
$\bm K$. Thus, the adjoint operator $\mathcal D^*$ is represented
by $\bm K^\top$. 

Starting with an initial signal $\bm u^0 = \bm f$, the evolving 
signal $\bm u^{k+1}$ at time step $k+1$ arises from the previous one by
\begin{equation}\label{eq:expl}
  \bm u^{k+1} = \bm u^k - \tau \bm K^\top \bm\Phi\!\left(\bm K \bm u^k\right).
\end{equation}
In this notation, the connection between an explicit diffusion step and a 
ResNet block becomes apparent:
\begin{theorem}[Diffusion-inspired ResNets]
A higher order diffusion step \eqref{eq:expl} is equivalent 
to a residual block \eqref{eq:residual_block} if
\begin{equation}\label{eq:translation}
  \sigma_1 = \tau \, \bm\Phi, \quad
  \sigma_2 = \textup{Id}, \quad
  \bm W_1 = \bm K, \quad
  \bm W_2 = -\bm K^\top,
\end{equation}
and the bias vectors $\bm{b}_1$, $\bm{b}_2$ are set to $\bm 0$. 
\end{theorem}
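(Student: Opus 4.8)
The plan is to prove the equivalence by direct substitution: I would take the proposed assignments in \eqref{eq:translation}, plug them into the general residual block \eqref{eq:residual_block}, and verify algebraically that the resulting expression collapses exactly to the explicit diffusion step \eqref{eq:expl}. Since this is a stated equivalence under an explicit set of conditions, there is no existence or optimality argument to make; the whole content is a substitution check in both directions.

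First I would substitute $\sigma_2 = \textup{Id}$, $\bm b_1 = \bm b_2 = \bm 0$ into \eqref{eq:residual_block}, which immediately removes the outer activation and both biases, leaving
\begin{equation*}
  \bm u = \bm f + \bm W_2 \, \sigma_1(\bm W_1 \bm f).
\end{equation*}
Next I would insert $\bm W_1 = \bm K$, $\bm W_2 = -\bm K^\top$, and $\sigma_1 = \tau\,\bm\Phi$, giving
\begin{equation*}
  \bm u = \bm f - \bm K^\top \, \tau\,\bm\Phi(\bm K \bm f)
        = \bm f - \tau\,\bm K^\top \bm\Phi(\bm K \bm f).
\end{equation*}
Identifying the block input $\bm f$ with the current iterate $\bm u^k$ and the block output $\bm u$ with $\bm u^{k+1}$, this is precisely \eqref{eq:expl}, establishing that the stated conditions are sufficient. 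For the converse direction I would argue that matching \eqref{eq:expl} term by term against the reduced block forces these identifications: the additive skip term fixes the identity scaling, the absence of any additive constant forces $\bm b_1 = \bm b_2 = \bm 0$, the outer composition must be the identity so that the residual structure survives, and the inner nonlinear map together with its pre- and post-multiplying matrices must reproduce $-\tau\,\bm K^\top \bm\Phi(\bm K\,\cdot)$.

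The only subtlety, and the main point worth stating carefully rather than a genuine obstacle, is that the scalar diffusion time step $\tau$ can be folded either into the activation function $\sigma_1$ or absorbed into the filter $\bm W_2$; the theorem's convention places it in $\sigma_1 = \tau\,\bm\Phi$, so I would note that the factorisation is not unique but that the chosen normalisation is the natural one, since it keeps $\bm W_1$ and $\bm W_2$ as pure discretisations of $\mathcal{D}$ and its adjoint. I would also remark explicitly that the transposed relationship $\bm W_2 = -\bm W_1^\top = -\bm K^\top$ is exactly the feature that encodes the adjoint operator $\mathcal{D}^*$ from \eqref{eq:diff}, since this is the structural insight the theorem is meant to expose. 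Beyond these remarks the verification is entirely routine linear algebra, so I expect no real difficulty.
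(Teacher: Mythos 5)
Your direct-substitution verification is correct and matches the paper's (implicit) argument exactly: the paper treats the theorem as immediate from writing the explicit step \eqref{eq:expl} in the form of \eqref{eq:residual_block} with $\bm f = \bm u^k$ and $\bm u = \bm u^{k+1}$, which is precisely your computation. Your additional remarks on the non-uniqueness of where $\tau$ is absorbed and on the converse matching go slightly beyond what the paper states but are consistent with it.
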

Interestingly, the inner activation function $\sigma_1$ corresponds 
to a scaled version of the flux function $\bm\Phi$. The effect of the 
skip connection in the residual block also becomes clear now: It is the central 
component to realise a time discretisation. We call a ResNet block of this form 
a \emph{diffusion block}. It is visualised in 
Figure~\ref{fig:diffusion_block}(a). Graph nodes contain the current state 
of the signal, while edges describe operations which are applied to proceed 
from one node to the next. 

We observe that the convolution matrices satisfy $\bm W_2 = - \bm W_1^\top$. 
This is a direct consequence of the gradient flow structure of the diffusion 
process. In the following, we prove stability and well-posedness for this 
specific form of ResNets.

%..........................................................................

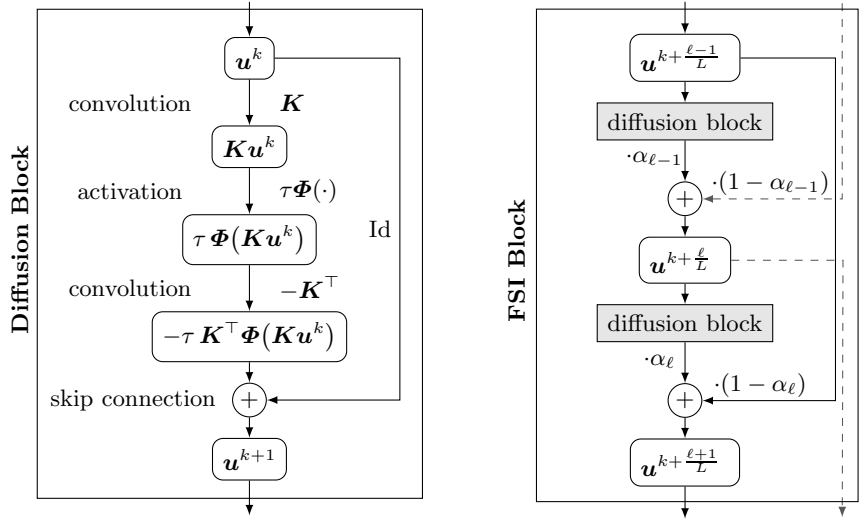
\begin{figure}[t]
  \centering
  \begin{subfigure}{.45\textwidth}
  \begin{tikzpicture}[-latex]
  \matrix (m)
  [
    matrix of nodes,
    column 1/.style = {nodes={block}}
  ]
  {
    |(input)|          % |[node type] (block name)|
    $\bm u^k$%         % expression within block (be careful withtrailing
                       % whitespace oustide math mode)
    \\[4.5ex]            % [distance to next row]
    |(fwd)|
    $\bm K \bm u^k$%
    \\[4.5ex]
    |(flux)|
    $\tau \, \bm\Phi\!\left(\bm K \bm u^k\right)$%
    \\[4.5ex]
    |(bwd)|
    $-\tau \, \bm K^\top \bm\Phi\!\left(\bm K \bm u^k\right)$%
    \\[2ex]
    |[operator] (add)|
    $+$%
    \\[2ex]
    |(output)|         
    $\bm u^{k+1}$%
    \\
  };

  % variables for shifts
    \def\nameshift{-1.8em}                    
    \def\skipconnshift{2.0}                     % shift for skipconnection to 
                                              % get around the largest block in 
                                              % the graph

    \layername{input}{fwd}{convolution}{\nameshift}   
    \edge{input}{fwd}
    \edgelabel{$\bm K$}
    
    \layername{fwd}{flux}{activation}{\nameshift}      
    \edge{fwd}{flux}     
    \edgelabel{$\tau \bm\Phi(\cdot)$}           
                                              
    \layername{flux}{bwd}{convolution}{\nameshift}                             
    \edge{flux}{bwd}          
    \edgelabel{$-\bm K^\top$}

    \node[left of=add, xshift=-4ex] {skip connection};
    \edge{bwd}{add}                             

    \edge{add}{output}
  
    \skipedge{input}{add}{\skipconnshift}   

  % manual input/output arrows
    \node[above of=input, yshift=-1ex] (TMP) {};
    \draw (TMP) -- (input);

    \node[below of=output, yshift=1ex] (TMP) {};
    \draw (output) -- (TMP);

  % frame box
    \node[left of=input, xshift=\nameshift-3.8em, yshift=2em] (topleft) {};
    \node[right of=output, xshift=4em, yshift=-1.6em] (bottomright) {};
    \draw (topleft) rectangle (bottomright);

  % rotated text
    \node [left of=flux, rotate=90, anchor=north, yshift=7em]
          {\textbf{Diffusion Block}};
\end{tikzpicture}
  \end{subfigure}
  \hfill
  \begin{subfigure}{.45\textwidth}
  \vspace{-3.5mm}
  \begin{tikzpicture}[-latex]
  \matrix (m)
  [
    matrix of nodes,
    column 1/.style = {nodes={block}}
  ]
  {
    |(input)|          % |[node type] (block name)|
    $\bm u^{k + \frac{\ell - 1}{L}}$ % expression within block 
    \\[2ex]            % [distance to next row]
    |[summary] (diff1)|
    diffusion block%
    \\[4ex]
    |[operator] (add1)|
    $+$%
    \\[2ex]
    |(middle)|
    $\bm u^{k + \frac{\ell}{L}}$
    \\[2ex]
    |[summary] (diff2)|
    diffusion block%
    \\[4ex]
    |[operator] (add2)|
    $+$%
    \\[2ex]
    |(output)|
    $\bm u^{k + \frac{\ell + 1}{L}}$
    \\
  };

  % variables for shifts
    \def\nameshift{-1em}                    
    \def\skipconnshift{2.0}                     % shift for skipconnection to 
                                              % get around the largest block in 
                                              % the graph
  
    \edge{input}{diff1}
    \edge{diff1}{add1}
    \edge{add1}{middle}
    \edge{middle}{diff2}
    \edge{diff2}{add2}
    \edge{add2}{output}
    
    \emptyskipedge{input}{add2}{\skipconnshift}
    \node[right of=add2, yshift=1.5ex] {$\cdot (1-\alpha_{\ell})$};
    \node[above of=add2, xshift=-2.5ex, yshift=-3.5ex] {$\cdot \alpha_{\ell}$};
    
    \node[right of=output, xshift=8ex, yshift=-6.2ex] (TMP) {} ;
    \draw[thin, dashed, black!65] (middle) -- +(2.1,0) -- (TMP);
    \node[thin, dashed, right of=add1, yshift=1.5ex, xshift=1ex] 
    {$\cdot (1-\alpha_{\ell-1})$};
    \node[thin, dashed, above of=add1, xshift=-3ex, yshift=-3.5ex] 
    {$\cdot \alpha_{\ell-1}$};
        
    \node[right of=input, xshift=8ex, yshift=6.2ex] (TMP) {} ;
    \draw[thin, dashed, black!65] (TMP) |- (add1);

  % manual input/output arrows
    \node[above of=input, yshift=-1ex] (TMP) {};
    \draw (TMP) -- (input);

    \node[below of=output, yshift=1ex] (TMP) {};
    \draw (output) -- (TMP);

  % frame box
    \node[left of=input, xshift=-3em, yshift=2em] (topleft) {};
    \node[right of=output, xshift=4em, yshift=-1.6em] (bottomright) {};
    \draw (topleft) rectangle (bottomright);

  % rotated text
    \node [left of=middle, rotate=90, anchor=north, yshift=4.5em]
          {\textbf{FSI Block}};
\end{tikzpicture}  
  \end{subfigure}
  \vspace{-7mm}
 \caption{{(a) Left:} Diffusion block for an explicit    
          diffusion step \eqref{eq:expl} with flux function $\bm\Phi$, time 
          step size $\tau$, and a discrete derivative operator $\bm K$.
         {(b) Right:} FSI block.
        \label{fig:diffusion_block}}
\end{figure}

%--------------------------------------------------------------------------

\subsection{Criteria for Well-posed and Stable Residual Networks}
Now we are able to transfer stability \cite{DWB09} and 
well-posedness \cite{We97} results for diffusion to a residual network 
consisting of diffusion blocks. We show Euclidean stability, which 
states that the Euclidean norm of the signal is nonincreasing in each 
iteration, i.e. $\nor{\bm u^{k+1}} \leq \nor{\bm u^k}$. Well-posedness 
guarantees that the network output is a continuous function of the input data.

\begin{theorem}[Euclidean Stability of ResNets with Diffusion Blocks]
\label{theo:stab}
  Consider a residual network chaining any number of diffusion blocks 
  \eqref{eq:expl} with convolutions represented by a convolution matrix 
  $\bm K$ and activation function $\tau \bm \Phi$. Moreover, assume that
  the activation function arises from a diffusion flux function 
  $\Phi(s) = g(s^2) \, s$ with finite Lipschitz constant $L$. Then the 
  residual network is well-posed and stable in the Euclidean norm if 
  %
  %\begin{equation}\label{eq:bound}
  $  \tau \leq 2 \left(L \nor{\bm K}^2\right)^{-1}.$
  %\end{equation} 
  %
  Here, $||\cdot||_2$ denotes the spectral norm which is induced by the 
  Euclidean norm. 
\end{theorem}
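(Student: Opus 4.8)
The plan is to establish the single-step contraction $\nor{\bm u^{k+1}} \leq \nor{\bm u^k}$ for one diffusion block \eqref{eq:expl} and then chain the estimate over all blocks. First I would expand the squared Euclidean norm of the update. Abbreviating $\bm v := \bm K \bm u^k$ and moving the transpose across the inner product via $\langle \bm u^k, \bm K^\top \bm\Phi(\bm v)\rangle = \langle \bm v, \bm\Phi(\bm v)\rangle$, this gives
\begin{equation*}
  \nor{\bm u^{k+1}}^2 = \nor{\bm u^k}^2 - 2\tau\,\langle \bm v, \bm\Phi(\bm v)\rangle + \tau^2 \nor{\bm K^\top \bm\Phi(\bm v)}^2 .
\end{equation*}
The whole argument then reduces to playing the negative middle term off against the positive quadratic term.

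The key step, which I expect to be the crux, is a pointwise inequality linking these two terms through the Lipschitz constant. Since $\Phi(0) = g(0)\cdot 0 = 0$, the Lipschitz bound yields $|\Phi(s)| \leq L|s|$. Because the diffusivity $g$ is nonnegative, $\Phi(s) = g(s^2)\,s$ carries the same sign as $s$, so $s\Phi(s) = |s|\,|\Phi(s)| \geq 0$. Combining the two facts gives, for each component, $\Phi(s)^2 = |\Phi(s)|\,|\Phi(s)| \leq L\,|s|\,|\Phi(s)| = L\,s\Phi(s)$. Summing over all components, I obtain the lower estimate
\begin{equation*}
  \langle \bm v, \bm\Phi(\bm v)\rangle \;\geq\; \frac{1}{L}\,\nor{\bm\Phi(\bm v)}^2 .
\end{equation*}

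Next I would bound the quadratic term by submultiplicativity of the spectral norm together with $\nor{\bm K^\top} = \nor{\bm K}$, so that $\nor{\bm K^\top \bm\Phi(\bm v)}^2 \leq \nor{\bm K}^2\,\nor{\bm\Phi(\bm v)}^2$. Substituting both estimates leads to
\begin{equation*}
  \nor{\bm u^{k+1}}^2 \leq \nor{\bm u^k}^2 - \left(\frac{2\tau}{L} - \tau^2 \nor{\bm K}^2\right)\nor{\bm\Phi(\bm v)}^2 .
\end{equation*}
The bracketed coefficient is nonnegative precisely when $\tau \leq 2\,(L\,\nor{\bm K}^2)^{-1}$, which is the asserted bound. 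In that regime every block is nonexpansive, and since a finite composition of nonexpansive maps is again nonexpansive, the Euclidean norm is nonincreasing across the entire network.

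Finally, for well-posedness I would argue continuous dependence on the input directly from the update map. Each block $\bm u \mapsto \bm u - \tau \bm K^\top \bm\Phi(\bm K\bm u)$ is Lipschitz with constant at most $1 + \tau L \nor{\bm K}^2$, because $\bm\Phi$ is Lipschitz with constant $L$ and $\bm K, \bm K^\top$ are bounded linear maps; the composition of finitely many such blocks is therefore Lipschitz, hence continuous, so the network output depends continuously on the input data. The only delicate point is the sign argument that upgrades the elementary Lipschitz bound into the lower estimate on $\langle \bm v, \bm\Phi(\bm v)\rangle$; once that inequality is in place, the remainder is routine norm algebra.
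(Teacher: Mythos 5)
Your proof is correct, but it follows a genuinely different route from the paper. The paper exploits the structure $\Phi(s)=g(s^2)\,s$ to factor the nonlinearity as a state-dependent diagonal matrix $\bm G(\bm u^k)$, rewrites one step as $\bm u^{k+1}=(\bm I-\tau\bm K^\top\bm G(\bm u^k)\bm K)\,\bm u^k$, and then localises the eigenvalues of this symmetric operator in $[1-\tau L\nor{\bm K}^2,\,1]\subseteq[-1,1]$, using that $\sup g=g(0)=\Phi'(0)\le L$. You instead run a direct energy estimate, and your crux is the pointwise co-coercivity inequality $\Phi(s)^2\le L\,s\,\Phi(s)$, obtained from sign preservation ($g\ge 0$) together with the linear growth bound $|\Phi(s)|\le L|s|$ that follows from $\Phi(0)=0$ and Lipschitz continuity. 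Both arguments are sound and land on the same threshold $\tau\le 2(L\nor{\bm K}^2)^{-1}$. Your version is slightly more general: it never needs the exact representation $\Phi(s)=g(s^2)s$, only that the activation is sign-preserving and satisfies $|\Phi(s)|\le L|s|$, and your well-posedness argument via an explicit Lipschitz constant $1+\tau L\nor{\bm K}^2$ per block is more quantitative than the paper's appeal to smoothness of $g$. The paper's version buys a sharper structural statement --- each step is multiplication by a symmetric matrix with spectrum in $[-1,1]$ --- which is the kind of information one would want for convergence-rate or semi-group arguments. One small presentational point: when you invoke the Lipschitz bound to get $|\Phi(s)|\le L|s|$ you should state explicitly that $\Phi(0)=0$ is what converts the Lipschitz property into a growth bound at the origin (you do note $\Phi(0)=g(0)\cdot 0=0$, so this is a matter of emphasis, not a gap).
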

\begin{proof}
  We first notice that since $\Phi(s) = g(s^2) \, s$, 
  applying the flux function leads to a rescaling with a diagonal matrix $\bm 
  G(\bm u^k)$ with $g((\bm K \bm u^k)^2_i)$ as $i$-th diagonal element. 
  Therefore, we can write \eqref{eq:expl} as
  \begin{equation}
    \bm u^{k+1} = \left(\bm I - \tau \bm K^\top \bm G(\bm 
      u^k) \bm K \right) \bm u^k.
  \end{equation}
  At this point, well-posedness follows directly from the continuity of the 
  operator $\bm I - \tau \bm K^\top \bm G(\bm u^k) \bm K$, as  
  the diffusivity $g$ is assumed to be smooth \cite{We97}.
    
  We now show that the time step size restriction guarantees 
  that the eigenvalues of the operator always lie in the interval $[-1, 1]$.
  As the spectral norm is sub-multiplicative, we can estimate the eigenvalues 
  of $\bm K^\top \bm G(\bm u^k) \bm K$ for each matrix separately.
  Since $g$ is nonnegative, the diagonal matrix $\bm G$ is positive 
  semi-definite. The maximal eigenvalue of $\bm G$ is the given by the supremum 
  of $g$. As $g$ is non-increasing and bounded, this value is bounded by the
  Lipschitz constant $L$ of $\Phi$.
  %
%  \begin{equation}
%    \frac{\partial}{\partial s} \Phi(s) = g(s^2)
%    + 2 s^2 g^\prime(s^2) \leq L.
%  \end{equation}
  %
  Thus, the eigenvalues of $\bm K^\top \bm G(\bm u^k) \bm K$ lie in the 
  interval $[0, \tau L \nor{\bm K}^2]$. Consequently, the operator
  $ \bm I - \tau \bm K^\top \bm G(\bm u^k) \bm K$ has 
  eigenvalues in $[1 - \tau L \nor{\bm K}^2, 1]$, and the condition 
  $ 1 - \tau L \nor{\bm K}^2 \geq -1$
  leads to the bound  $\tau \leq 2 \left(L \nor{\bm K}^2\right)^{-1}.$
  \qed
\end{proof}

\subsubsection{How General is this Result?}
Theorem \ref{theo:stab} is of fairly general nature and applies to a 
broad class of ResNets.
The fact that $\bm K$ represents a
discrete differential operator is no restriction on the convolution, since
any convolution kernel can be seen as a discretisation of a suitable 
differential operator $\mathcal{D} = \sum_{m=0}^M \alpha_m\partial_x^m$.

Interestingly, our proof does not require the matrix $\bm K$ to have a 
convolution structure: It can be any arbitrary matrix. This even includes 
neural networks beyond CNNs, since the weights within a layer may 
differ from node to node.
\begin{center}
\setlength{\fboxsep}{2mm}
\setlength{\fboxrule}{0.4mm}
\fbox{
\begin{minipage}{0.7\linewidth}
{\bf The key requirement for network stability is the\\
transposed convolution structure $\bm W_2 = -\bm W_1^T$.} 
\end{minipage}
}
\end{center}
While this requirement is not fulfilled by the original ResNet 
\cite{HZRS16}, several works employ the transposed structure 
\cite{CP16,RH20,ZS20} as it is justified from a PDE perspective, requires less 
parameters, and provides stability guarantees.

In contrast to Ruthotto and Haber \cite{RH20}, our stability result does not 
require activation functions to be monotone. Let us now see that widely used 
diffusivities naturally lead to nonmonotone activation functions.

%------------------------------------------------------------------------

\subsection{Nonmonotone Activation Functions}
The connection between diffusivity $g(s^2)$ and activation function
$\sigma(s) = \tau \Phi(s)$ with the diffusion flux $\Phi(s) = g(s^2) \, s$ 
revitalises an old idea of neural network design \cite{FMNP93,MR94}. 
As an example, we translate the exponential Perona--Malik diffusivity 
$g(s^2)=\exp\bigl(-\frac{s^2}{2\lambda^2}\bigr)$ 
into its corresponding activation 
$\sigma(s)=\tau s \exp\bigl(-\frac{s^2}{2\lambda^2}\bigr)$. 
Interestingly, this activation function is {\em antisymmetric} and 
{\em nonmonotone.} 

Antisymmetry is very natural in the diffusion case with
$\mathcal{D} = \sum_{m=1}^M \alpha_m\partial_x^m$, where the argument of the 
flux function consists of signal derivatives. It reflects the invariance 
axiom that signal negation and filtering are commutative.
Nonmonotone flux functions were considered somewhat problematic for continuous 
diffusion PDEs. However, it has been shown that their discretisations are 
well-posed \cite{WB97}, in spite of the fact that they may act contrast 
enhancing.

The concept of a nonmonotone activation function is unusual in the CNN world. 
Although there have been a few early proposals in the neural network literature 
arguing in favour of nonmonotone activations \cite{FMNP93,MR94}, they are 
rarely used in modern CNNs. In practice, CNNs often fix the activation to 
simple functions such as the rectified linear unit (ReLU).
From a PDE perspective, this appears restrictive. The diffusion interpretation 
suggests that activation functions should be learned in the same manner as 
convolution weights and biases. In practice, this hardly happens apart from a 
few notable exceptions such as \cite{CP16,GWMC13,OMLM18}. As nonmonotone 
flux functions outperform monotone ones in the diffusion setting, it appears 
promising to incorporate them into CNNs. For more examples of 
diffusion-inspired activation functions, we refer to \cite{AWP20}. 

%------------------------------------------------------------------------

\subsection{FSI Schemes and Additional Skip Connections}\label{sec:fsi}

In the following, we show that an acceleration strategy of the 
explicit scheme induces a natural modification for the skip connections of the 
corresponding ResNet architecture.
To speed up explicit schemes, Hafner et al.~\cite{HOWR16} proposed 
\emph{fast semi-iterative} (FSI) schemes. They perform a cycle of
extrapolated explicit steps. For our diffusion scheme \eqref{eq:expl},
an FSI acceleration with cycle length $L$ reads
\begin{equation}\label{eq:fsi}
  \bm u^{k+\frac{\ell+1}{L}} = \alpha_\ell 
  \left(\bm I - \tau \bm K^\top \bm \Phi\!\left(\bm K 
  \bm u^{k+\frac{\ell}{L}}\right)\right) + 
  \left(1-\alpha_\ell\right)\bm u^{k+\frac{\ell-1}{L}}
\end{equation}
with $\ell=0,\dots,L\!-\!1$ and extrapolation weights 
$\alpha_\ell \coloneqq (4\ell+2)/(2\ell+3)$. 
One formally initialises with $\bm{u}^{k-\frac{1}{L}} \coloneqq \bm u^{k}$. 
This cycle realises a super time step of size
$\frac{L(L+1)}{3}\tau$. Thus, with one cycle involving $L$ explicit steps, 
one reaches a super step size of $\mathcal{O}(L^2)$ rather than
$\mathcal{O}(L)$. This explains its 
remarkable efficiency \cite{HOWR16}.

We see that FSI extrapolates the diffusion result at time step 
$k+\frac{\ell}{L}$ with the previous time step $k+\frac{\ell-1}{L}$ 
and the weight $\alpha_\ell$. This can be realised with a small change
in the original diffusion block from Figure~\ref{fig:diffusion_block}(a)
by adding an additional skip connection. The two skip connections are weighted 
by $\alpha_\ell$ and $(1-\alpha_\ell)$, respectively. This gives the 
architecture in Figure~\ref{fig:diffusion_block}(b).

We observe a different benefit of skip connections: Additional and more general 
skip connections constitute a whole class of acceleration strategies, which is 
in line with observations in the CNN literature; see e.g. \cite{HLMW17,LZLD18}.

%--------------------------------------------------------------------------

\subsection{Implicit Schemes and Recurrent Neural Networks}
So far, we have connected variants of explicit schemes to ResNets. However, 
implicit discretisations are another important class of solvers. We now 
show that such a discretisation of our diffusion equation leads 
to a recurrent neural network (RNN). RNNs are classical neural network 
architectures; see e.g.~\cite{Ho82}. 
The fully implicit discretisation of \eqref{eq:diff} is given by
\begin{equation}
  \bm u^{k+1} = \bm 
  u^k - \tau \bm K^\top \bm \Phi\!\left(\bm K \bm u^{k+1}\right).
\end{equation}
We solve the resulting nonlinear system of equations by $L$ fixed point 
iterations:
\begin{equation}
  \bm u^{k+\frac{\ell+1}{L}} = \bm u^k - \tau \bm K^\top \bm \Phi\! 
  \left(\bm K \bm u^{k+\frac{\ell}{L}}\right),
\end{equation}
where $\ell=0,\dots,L\!-\!1$, and where we assume that $\tau$ is sufficiently
small to yield a contraction mapping.
For $L=1$, we obtain the explicit scheme \eqref{eq:expl} with its 
ResNet interpretation. For larger $L$, however, different skip connections  
arise. They connect the layer at time step $k$ with all subsequent 
layers at steps $k+\frac{\ell}{L}$ with $\ell=0,\dots,L\!-\!1$. This feedback 
can be seen as an RNN architecture. 

In the context of variational models, Chen and Pock \cite{CP16} have obtained 
a similar architecture. However, they explicitly supplement the diffusion
process with an additional reaction term which results from the data term
of the energy. Our feedback term is a pure numerical phenomenon of the
fixed point solver.

We see that skip connections can implement a number of successful numerical 
concepts: 
forward difference approximations of the time derivative in explicit schemes,
extrapolation steps to accelerate them e.g.~via FSI, and
recurrent connections within fixed point solvers for implicit schemes.

%%%%%%%%%%%%%%%%%%%%%%%%%%%%%%%%%%%%%%%%%%%%%%%%%%%%%%%%%%%%%%%%%%%%%%%%%%%

\section{Multigrid Solvers and U-nets}\label{sec:multigrid}

Multigrid methods \cite{BHM00} are very efficient numerical 
strategies for solving PDE-based problems. Neural 
networks, on the other hand, have benefitted from multiscale ideas 
as well, as can be seen e.g.~from the high popularity of U-nets~\cite{RFB15}.
In this section we shed some light on their structural connections. For 
simplicity we restrict ourselves to a linear multigrid setting with 
two levels.

%-----------------------------------------------------------------------------

\subsection{U-net Architectures}
The U-net \cite{RFB15} has proven useful in applications such as segmentation 
\cite{RFB15} or pose estimation \cite{NYD16}, where features on multiple scales 
need to be extracted 

As its name suggests, the U-net has a symmetric shape: On the left half of the 
architecture, convolutions extract features while repeated downsampling 
operators reduce the resolution. On the right half, features  
are successively upsampled, combined and convolved, starting with the coarsest 
resolution. The original U-net \cite{RFB15} combines features by concatenation, 
while other works such as \cite{NYD16} use addition. In the following, we focus 
on the latter design choice.

For our purposes, it is sufficient to consider a U-net with only two 
resolutions and a constant number of channels. We use superscripts $h$ and $H$ 
to denote computations on the fine and coarse grid, respectively. The following 
six steps capture the essential structure of such a U-net:
\begin{enumerate}
\item One applies a number of CNN layers to the input $\bm f^h$, 
yielding a modified signal $\bm{\tilde f}^h$. We denote this general operation 
by a function $\bm C_1^h(\cdot)$. 
\item To provide a coarse input $\bm f^H = \bm R^{h\rightarrow H} \bm{\tilde 
f}^h$ to the next level, a restriction operator $\bm R^{h\rightarrow H}$ brings 
the modified signal $\bm{\tilde f}^h$ to a coarse resolution $H$. For example, 
the restriction can consist of an averaging or max-pooling.
\item On the coarse grid, the downsampled signal $\bm f^H$ is again modified by 
a series of layers to obtain $\bm{\tilde f}^H = \bm C^H(\bm f^H)$. 
\item One upsamples the coarse result $\bm{\tilde f}^H$ with a 
prolongation operator $\bm P^{H\rightarrow h}$.
\item On the fine grid, one adds the modified fine grid signal $\bm{\tilde 
f}^h$ and the upsampled one $\bm P^{H\rightarrow h}\bm f^H$ and obtains
$\bm{\tilde f}_{\text{new}}^h$.
\item Lastly, applying more layers $\bm C_2^h(\cdot)$ yields
the final solution $\bm{\hat{f}} = \bm C_2^h(\bm{\tilde f}_{\text{new}}^h)$.
\end{enumerate}
Figure~\ref{fig:multigrid_and_unet}(a) visualises this architecture. In the 
following, we express a multigrid V-cycle in this form by utilising multiple 
network channels.

%............................................................................
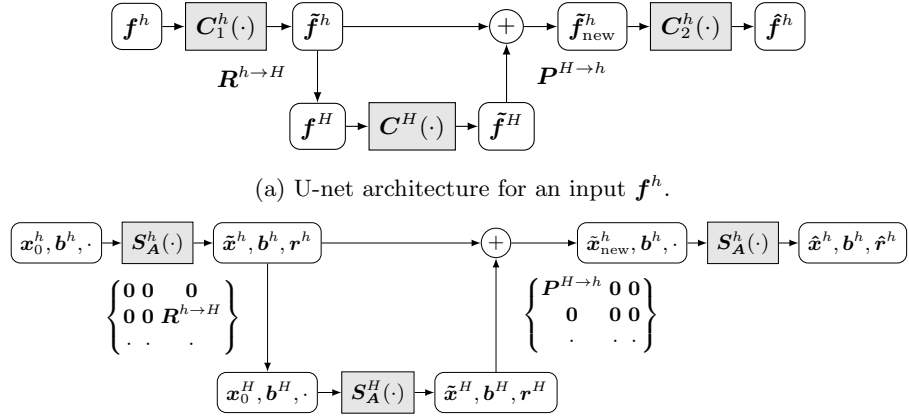
\begin{figure}[t]
\begin{subfigure}{\textwidth}
\centering
\begin{tikzpicture}[-latex]
  \matrix (m)
  [
    matrix of nodes,
    column sep=3mm,
    column 1/.style = {nodes={block}},
    column 2/.style = {nodes={block}},
    column 3/.style = {nodes={block}},
    column 4/.style = {nodes={block}},
    column 5/.style = {nodes={block}},
    column 6/.style = {nodes={block}},
    column 7/.style = {nodes={block}},
    column 8/.style = {nodes={block}}
  ]
  {
    % row 1 
        % col 1
        |(input)|          % |[node type] (block name)|
        $\bm f^h$%        
    % row 1
    &   % col 2
        |[summary] (conv1)|
        $\bm C_1^h(\cdot)$%
    % row 1
    &   % col 3
        |(conv1out)|
        $\bm{\tilde f}^h$%
    % row 1
    &   % col 4
        %
        %
    % row 1
    &   % col 5
        |[operator] (add)|
        $+$%
    % row 1
    &   % col 6
        |(conv2in)|
        $\bm{\tilde f}_{\text{new}}^h$%
    % row 1
    &   % col 7
        |[summary] (conv2)|
        $\bm C_2^h(\cdot)$%
    % row 1
    &   % col 8
        |(output)|
        $\bm{\hat{f}}^h$%
    \\[5ex]
    % row 2 
        % col 1
        %
        %
    % row 2
    &   % col 2
        %
        %
    % row 2
    &   % col 3
        |(down)|
        $\bm f^H$%
    % row 2
    &   % col 4
        |[summary] (downconv)|
        $\bm C^H(\cdot)$%
    % row 2
    &   % col 5
        |(downconvout)|
        $\bm{\tilde f}^H$%
    % row 2
    &   % col 6
        %
        %
    % row 2
    &   % col 7
        %
        %
    % row 2
    &   % col 8
    \\
  };
  
  \edge{input}{conv1}
  \edge{conv1}{conv1out}
  \edge{conv1out}{add}
  \edge{add}{conv2in}
  \edge{conv2in}{conv2}
  \edge{conv2}{output}
  
  \edge{conv1out}{down}
  \edgelabelleft{$\bm R^{h\rightarrow H}$}
  \edge{down}{downconv}
  \edge{downconv}{downconvout}
  \edge{downconvout}{add}
  \edgelabel{$\bm P^{H \rightarrow h}$}

\end{tikzpicture}
\caption{U-net architecture for an input $\bm f^h$.}
\vspace{1mm}
\end{subfigure}
\begin{subfigure}{\textwidth}
\centering
\resizebox{\textwidth}{!}{\begin{tikzpicture}[-latex]
  \matrix (m)
  [
    matrix of nodes,
    column sep=3mm,
    column 1/.style = {nodes={block}},
    column 2/.style = {nodes={block}},
    column 3/.style = {nodes={block}},
    column 4/.style = {nodes={block}},
    column 5/.style = {nodes={block}},
    column 6/.style = {nodes={block}},
    column 7/.style = {nodes={block}},
    column 8/.style = {nodes={block}},
  ]
  {
    % row 1 
        % col 1
        |(input)|          % |[node type] (block name)|
        $\bm x_0^h, \bm b^h, \cdot$%        
    % row 1
    &   % col 2
        |[summary] (solv1)|
        $\bm S^h_{\bm A}(\cdot)$%
    % row 1
    &   % col 3
        |(solv1out)|
        $\bm{\tilde x}^h, \bm b^h, \bm r^h$%
    % row 1
    &   % col 4
        %
        %
    % row 1
    &   % col 7
        |[operator] (add)|
        $+$%
    % row 1
    &   % col 9
        |(solv2in)|
        $\tilde{\bm{x}}_{\text{new}}^h, \bm b^h, 
        \cdot$%
    % row 1
    &   % col 9
        |[summary] (solv2)|
        $\bm S^h_{\bm A}(\cdot)$%
    % row 1
    &   % col 10
        |(output)|
        $\bm{\hat x}^h, \bm b^h, \bm{\hat{r}}^h$%
    \\[12ex]
    % row 2 
        % col 1
        %
        %
    % row 2
    &   % col 2
        %
        %
    % row 2
    &   % col 3
        |(down)|
        $\bm x_0^H, \bm b^H, \cdot$%
    % row 2
    &   % col 4
        |[summary] (downsolv)|
        $\bm S^H_{\bm A}(\cdot)$%
    % row 2
    &   % col 5
        |(downsolvout)|
        $\bm{\tilde x}^H, \bm b^H, \bm r^H$%
    % row 2
    &   % col 7
        %
        %
    % row 2
    &   % col 8
        %
        %
    % row 2
    &   % col 8
    \\
  };
  
  \edge{input}{solv1}
  \edge{solv1}{solv1out}
  \edge{solv1out}{add}
  \edge{add}{solv2in}
  \edge{solv2in}{solv2}
  \edge{solv2}{output}
  
  \edge{solv1out}{down}
  \edgelabelleft{$\begin{Bmatrix}
  \bm 0 & \bm 0 &\bm 0 \\
  \bm 0 & \bm 0 & \bm R^{h \rightarrow H} \\
  \cdot & \cdot &\cdot
  \end{Bmatrix}$}
  \edge{down}{downsolv}
  \edge{downsolv}{downsolvout}
  \edge{downsolvout}{add}
  \edgelabel{$\begin{Bmatrix}
    \bm P^{H \rightarrow h} & \bm 0 &\bm 0 \\
    \bm 0 & \bm 0 &\bm 0 \\
    \cdot & \cdot & \cdot
    \end{Bmatrix}$}

\end{tikzpicture}}
\caption{Two-level V-Cycle in the form of a U-net utilising three-channel 
signals containing the iteration variable $\bm x$, a right hand side $\bm 
b$, and the residual $\bm r$.}
\end{subfigure}
\caption{Architectures for a general U-net and a multigrid V-cycle.
\label{fig:multigrid_and_unet}}
\end{figure}

%------------------------------------------------------------------------------

\subsection{Expressing a Multigrid V-cycle within a U-net}

Multigrid methods \cite{BHM00} allow for 
the efficient solution of equation systems that result from the numerical 
approximation of PDEs. For simplicity we consider a linear system of equations
given by $\bm A \bm x = \bm b$. For classical iterative solvers such as the
Jacobi or the Gauss--Seidel method, one observes that low-frequent error
components are attenuated only very slowly. Hence, their convergence is
slow. Multigrid methods transfer the low-frequent error components to a
coarser scale, where the iterative solvers work more efficiently. The coarse 
scale solution is then used to correct the fine scale approximation. 

To connect multigrid ideas to U-nets, we consider a V-cycle on two levels 
with grid sizes $h$ and $H$ for the fine and coarse grid.
For our U-net, we use three channels. They contain
the iteration variable $\bm x$ of the solver, the right hand side $\bm b$ of 
the equation system, and the current residual $\bm r$. Even though we do not 
always need all channels, we keep the channel number constant for simplicity. 
A two-level V-cycle solves the linear system by repeating the following steps:
\begin{enumerate}
\item The inputs are a fine grid initialisation $\bm x_0^h = \bm 0$
and the given right hand side $\bm b^h$. The residual at this point is ignored, 
as it is not relevant to the solver input. We 
assume that we are given a solver $\bm S^h_{\bm A}(\cdot)$ 
for the operator $\bm A^h$. It produces a three-channel 
signal containing an approximate solution $\bm{\tilde x}^h$, the right hand 
side $\bm b^h$, and a residual $\bm r^h = \bm b^h - \bm A^h \bm{\tilde x}^h$. 
\item While the true error $\bm e^h$ of the approximation is unknown, the 
residual $\bm r^h$ can be computed. This leads to the residual equation $\bm 
A^h \bm e^h = \bm r^h$ which can be solved efficiently on a coarser grid. To 
this end, one uses a restriction operator $\bm R^{h\rightarrow H}$. As the 
downsampling is now explicitly concerned with three channels, the corresponding 
operator in the CNN is a $3\times3$ block matrix. The coarse initialisation 
$\bm x_0^H = \bm 0$ does not require any information from the fine scale. 
Crucially, the new right hand side $\bm b^H$ is the downsampled fine residual, 
i.e. $\bm b^H = \bm R^{h \rightarrow H} \bm r^h$. Lastly, an input residual is 
not required for the coarse solver. Thus, one obtains the coarse grid residual 
equation $\bm A^H \bm x^H = \bm b^H$.
\item The coarse grid solver $\bm S^H_{\bm A}(\cdot)$ now solves the 
residual equation. It outputs a coarse approximation $\bm{\tilde x}^H$ to the 
residual error, the right hand side $\bm b^H$, and a new coarse residual $\bm 
r^H$. The latter two would be required if one wants to add another level to the 
cycle.
\item In the upsampling step, we prepare the coarse scale 
outputs for the following addition. Similar to the downsampling, we upsample 
only the coarse error approximation $\bm{\tilde x}^H$ by a prolongation 
operator $\bm P^{H \rightarrow h}$. The coarse right hand side $\bm b^H$ is set 
to $\bm 0$ as to not interfere with the fine right hand side. 
\item On the fine grid, one adds the three signal channels. The initial fine 
grid approximation is updated with the upsampled error on the coarse grid by 
$\tilde{\bm{x}}_{\text{new}}^h \coloneqq \bm{\tilde x}^h  + \bm P^{H 
\rightarrow h}\bm{\tilde x}^H$. As we have applied the prolongation only to the 
coarse solution, the fine grid right hand side $\bm b^h$ is propagated.
\item Another instance of the fine grid solver $\bm S^h_{\bm A}(\cdot)$ takes 
the corrected solution $\bm x_\text{new}^h$ and the original right 
hand side $\bm b^h$, yielding a new approximation $\bm{\hat{x}}^h$.
\end{enumerate} 
We visualise this architecture in Figure~\ref{fig:multigrid_and_unet}(b). 
Restriction and prolongation operators are applied only to 
certain channels of the solver output instead of all channels. In the 
downsampling phase, the restriction is applied to the residual, while in the 
upsampling phase, it is applied to the approximated error. This enables the 
coarse solver to work on the residual equation instead of only a coarse version 
of the original equation, which is the crucial idea of multigrid methods.
The architecture utilises yet another form of skip connection: The fine scale 
approximation is corrected by adding an upsampled error approximation.

Our two-level setting can be generalised to more levels. 
Deeper V-cycles are constructed by stacking 
the two-level V-cycle recursively, and so-called 
W-cycles are built by concatenating two V-cycles. On the CNN side, this leads 
to U-nets with more levels, as well as concatenations thereof. This idea is 
also used in practice: Successful U-nets work on multiple resolutions 
\cite{RFB15}, and so-called stacked hourglass models \cite{NYD16} arise by 
concatenating multiple V-cycle architectures. It shows that 
multigrid architectures share essential structural properties with U-nets.

%%%%%%%%%%%%%%%%%%%%%%%%%%%%%%%%%%%%%%%%%%%%%%%%%%%%%%%%%%%%%%%%%%%%%%%%%%%

\section{Conclusions}\label{sec:conclusions}

Our paper is based on the philosophy of regarding a trained neural network
as a numerical algorithm. To substantiate this claim, we have translated 
a number of efficient numerical algorithms for PDEs into popular building
blocks for network architectures. Apart from a few notable exceptions 
such as \cite{LZLD18}, this strategy has rarely been pursued in its full 
consequence. We have shown that valuable structural insights can be 
gained from such a direct translation, and we have derived systematic 
design principles for well-founded network components.

More specifically, we have shown the value of skip connections from three 
different numerical perspectives: as time discretisations in explicit 
schemes, as extrapolation terms to increase their efficiency, and as 
recurrent connections in implicit schemes with fixed point structure. 
By connecting multigrid methods to U-nets, we provide a basis 
for explaining for their remarkable efficiency. Numerical schemes for
generalised diffusion processes suggest that nonmonotone 
activation functions are permissible and can be advantageous. Last but not 
least, we have seen that a ResNet block with a transposed structure
of both convolution layers can guarantee Euclidean
stability in a simple and elegant way.

Our contributions can serve as a blueprint for translating a larger class 
of successful numerical concepts for PDEs to CNNs. This is part of our
ongoing work. It is our hope that this will lead to a closer connection
of both worlds and to hybrid methods that unite the stability and efficiency
of modern numerical algorithms with the performance of neural networks.
 
\medskip
\noindent\textbf{Acknowlegdements.} We thank Matthias Augustin and Michael 
Ertel for fruitful discussions and feedback on our manuscript.

%
% ---- Bibliography ----
%
% BibTeX users should specify bibliography style 'splncs04'.
% References will then be sorted and formatted in the correct style.
%
 \bibliographystyle{splncs04}
 \bibliography{refs}

\end{document}